\newtheorem{theorem}{Theorem}[section]
\newtheorem{lemma}[theorem]{Lemma}
\newtheorem{corollary}[theorem]{Corollary}
\newtheorem{conjecture}[theorem]{Conjecture}
\theoremstyle{definition}
\newtheorem{definition}[theorem]{Definition}
\theoremstyle{remark}
\newtheorem{remark}[theorem]{Remark}
\theoremstyle{theorem}
\title[The simple type conjecture for mod 2 Seiberg--Witten invariants]{The simple type conjecture for mod 2 Seiberg--Witten invariants}
\author[Tsuyoshi Kato]{Tsuyoshi Kato}
\address{Department of Mathematics, Kyoto University, Kyoto, 606-8502, Japan}
\email{tkato@math.kyoto-u.ac.jp}
\author[Nobuhiro Nakamura]{Nobuhiro Nakamura}
\address{Department of Mathematics, Osaka Medical College, 2-7 Daigaku-machi, Takatsuki City, Osaka, 569-8686, Japan}
\email{mat002@osaka-med.ac.jp}
\author[Kouichi Yasui]{Kouichi Yasui}
\address{Department of Pure and Applied Mathematics, Graduate School of Information Science and Technology, Osaka University, 
1-5 Yamadaoka, Suita, Osaka 565-0871, Japan}
\email{kyasui@ist.osaka-u.ac.jp}
\date{September 14, 2020. \textit{Revised}: December 22, 2020}
\subjclass[2010]{Primary: 57R55. Secondary: 57R65}
\keywords{4-manifolds; Bauer--Furuta invariants; handle decompositions}
\begin{document}

\begin{abstract} We prove that, under a simple condition on the cohomology ring, every closed 4-manifold has mod 2 Seiberg--Witten simple type. This result shows that there exists a large class of topological 4-manifolds such that all smooth structures have mod 2 simple type, and yet some have non-vanishing (mod 2) Seiberg--Witten invariants. As corollaries, we obtain adjunction inequalities and show that, under a mild topological condition, every geometrically simply connected closed 4-manifold has the vanishing mod 2 Seiberg--Witten invariant for at least one orientation.    
\end{abstract}

\maketitle
\section{Introduction}\label{sec:introduction}The Seiberg--Witten invariant~\cite{Wi94} of a smooth 4-manifold has played a significant role in the study of 4-manifolds over the past 25 years, and has produced many striking applications to low dimensional topology. Although the invariants have been computed for various 4-manifolds, in general it still seems out of reach to compute. The simple type conjecture, posed in 1990s, states a fundamental constraint on the invariant (e.g.\ \cite[Conjecture 1.6.2]{KM07}). 

\begin{conjecture}[Simple type conjecture]\label{conj:simpletype}Every closed, connected, oriented, smooth 4-manifold with $b_2^+>1$ has Seiberg--Witten simple type. 
\end{conjecture}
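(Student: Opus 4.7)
The plan is to attack Conjecture~\ref{conj:simpletype} through the Bauer--Furuta stable cohomotopy refinement of the Seiberg--Witten equations. For each Spin$^c$ structure $\mathfrak{s}$ on $X$, one encodes the equations as a stable $S^1$-equivariant map $\mathrm{BF}(\mathfrak{s}): S^{a+b\mathbb{C}} \to S^{c+d\mathbb{C}}$ whose $S^1$-equivariant Euler number recovers the numerical Seiberg--Witten invariant, and whose higher equivariant-stem information records evaluations of the $\mu$-class that witness non-simple type. The aim is to show that whenever the formal dimension $d(\mathfrak{s})>0$, the Bauer--Furuta map is equivariantly trivial in the relevant stem, which forces $SW(\mathfrak{s})=0$.

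First I would set up finite-dimensional approximations with enough control to detect $\mu$-class contributions, and use the structure of $S^1$-equivariant stable stems together with the input $b_2^+>1$ (which kills wall-crossing ambiguity and makes the invariant well defined) to constrain the equivariant homotopy type of $\mathrm{BF}(\mathfrak{s})$. Next, I would use handle-theoretic decompositions $X = X_1 \cup_Y X_2$ and appeal to gluing theorems for Bauer--Furuta invariants of 4-manifolds with boundary (Manolescu, Khandhawit--Lin--Sasahira) to reduce the problem inductively along some notion of complexity of a handle decomposition. The base cases are those for which simple type is already established on geometric grounds: symplectic 4-manifolds via Taubes, K\"ahler surfaces, and manifolds with pseudo-holomorphic structure in the sense of Kronheimer--Mrowka, together with connected-sum and fiber-sum formulas that propagate simple type through such operations.

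The fundamental obstacle, and the reason Conjecture~\ref{conj:simpletype} has remained open for a quarter-century, is twofold. On the topological side, a general smooth 4-manifold admits no canonical decomposition into pieces for which simple type is known; exotic smooth structures on the same topological manifold can display wildly different Seiberg--Witten behavior, so a purely handle-theoretic reduction cannot see enough of the smooth structure. On the stable-homotopy side, there is no a priori reason for the higher equivariant stems of $\mathrm{BF}(\mathfrak{s})$ to vanish when $d(\mathfrak{s})>0$: ruling this out requires a new geometric input specific to the Seiberg--Witten moduli spaces, rather than formal properties of equivariant stable homotopy. The present paper's strategy of reducing coefficients modulo $2$ and imposing a mild cohomology-ring condition is precisely a way to sidestep these obstructions in a controlled setting; removing both simplifications to obtain the full conjecture is the step at which the plan must acquire a substantively new ingredient, and this is where I expect the main difficulty to lie.
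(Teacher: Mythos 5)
There is a fundamental mismatch here: the statement you were asked about is Conjecture~\ref{conj:simpletype}, which the paper does not prove and which remains open; the paper only establishes a mod~2 version of simple type (Theorem~\ref{intro:thm:cohomology_ring}) under an extra hypothesis on the cup products of degree-one classes. Your text is not a proof but a research outline, and you acknowledge as much in your final paragraph. The two load-bearing steps in your plan are exactly the ones with no argument behind them: (i) that the Bauer--Furuta class of $\mathfrak{s}$ is equivariantly trivial in the relevant stem whenever $d_X(\mathfrak{s})>0$ --- nothing in the structure of $S^1$-equivariant stable stems forces this, and indeed for integer coefficients no such vanishing statement is known; and (ii) the proposed induction over handle decompositions via gluing formulas has no viable inductive step, since a generic decomposition $X=X_1\cup_Y X_2$ along an arbitrary $3$-manifold $Y$ admits no gluing formula that controls the invariant, and the base cases (symplectic, K\"ahler) do not exhaust smooth structures on any topological $4$-manifold. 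So the proposal cannot be completed as written, and no comparison of proofs is possible because the paper offers none for this statement.

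For contrast, what the paper actually does for the weaker mod~2 statement is quite different from your outline and does not attempt any triviality or gluing induction: assuming $SW_X(\mathfrak{s})\equiv 1 \pmod 2$ and $d_X(\mathfrak{s})=2n>0$, it blows up $n$ times to reduce to a $d=0$ situation, invokes the Ishida--Sasahira connected-sum non-vanishing theorem for the Bauer--Furuta invariant to conclude that the resulting class is a monopole class on $X_n\# K3$, and then contradicts Kronheimer's adjunction inequality using a genus-$g$ surface in the $K3$ with $\alpha\cdot\alpha=2g-2$ tubed to an exceptional sphere. The mod~2 reduction and the cohomology-ring hypothesis enter precisely to satisfy the BF-admissibility condition of Ishida--Sasahira, which is why the integral Conjecture~\ref{conj:simpletype} is untouched by this method as well.
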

Here a closed, connected, oriented, smooth 4-manifold $X$ with $b_2^+>1$ is called of \textit{Seiberg--Witten simple type} if the (integer valued) 
Seiberg--Witten invariant $SW_X(\mathfrak{s})$ (see \cite{Ni00, KM07}) of a spin$^c$ structure $\mathfrak{s}$ on $X$ is zero whenever the virtual dimension $d_X(\mathfrak{s})$ of the Seiberg--Witten moduli space for $\mathfrak{s}$ is non-zero. 
We note that $d_X(\mathfrak{s})=\frac{1}{4}(c_1(\mathfrak{s})^2-2\chi(X)-3\sigma(X))$, where $\chi$ and $\sigma$ respectively denote the Euler characteristic and the signature. Due to \cite{Wu48}, the conjecture is equivalent to the following: if $SW_X(\mathfrak{s})\neq 0$, then $c_1(\mathfrak{s})$ is the first Chern class of an almost complex structure on $X$. 

In the case where $b_2^+-b_1\equiv 0\pmod{2}$, $SW_X(\mathfrak{s})=0$ by the definition, and hence the conjecture is trivial. In the case where $b_2^+-b_1\equiv 1\pmod{2}$, the conjecture has been proved for many smooth 4-manifolds under smooth restrictions such as the existence of a symplectic structure (\cite{Tau96}). However, the conjecture remains open for general smooth structures on any topological 4-manifold. 

In this paper, we discuss the mod 2 version of the conjecture.  A closed, connected, oriented, smooth 4-manifold $X$ with $b_2^+>1$ will be called of \textit{mod 2 Seiberg--Witten simple type} if  $SW_X(\mathfrak{s})\equiv 0\pmod {2}$ whenever $d_X(\mathfrak{s})\neq 0$. Here we prove the mod 2 simple type conjecture under a simple condition on the cohomology ring. 

\begin{theorem}\label{intro:thm:cohomology_ring}Let $X$ be a closed, connected, oriented, smooth 4-manifold with $b_2^+-b_1>1$ and $b_2^+-b_1\equiv 3\pmod{4}$, and let $\{\delta_1, \delta_2, \dots, \delta_k\}$ be a generating set of $H^1(X;\mathbb{Z})$. If each cup product $\delta_i\cup \delta_j$ is either torsion or divisible by 2, then $X$ has mod 2 Seiberg--Witten simple type. 
\end{theorem}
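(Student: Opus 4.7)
The plan is to use Bauer's stable cohomotopy refinement of the Seiberg--Witten invariants in the parameterized setting over the Picard torus $T:=H^1(X;\mathbb{R})/H^1(X;\mathbb{Z})$. Fix a spin$^c$ structure $\mathfrak{s}$ with $d:=d_X(\mathfrak{s})\neq 0$; the task is to show that the integer-valued invariant $SW_X(\mathfrak{s})$ is even. First I would recall that after a finite-dimensional approximation of the monopole map and quotient by the identity component of the gauge group, one obtains an $S^1$-equivariant fiberwise map
\[
BF(\mathfrak{s})\colon \mathrm{Th}(V_+)\longrightarrow \mathrm{Th}(V_-)
\]
between Thom spaces of two finite-rank complex vector bundles $V_\pm\to T$, with rank difference $d/2$. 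The integer $SW_X(\mathfrak{s})$ is then recovered as a characteristic number of this map: one pulls back the equivariant Thom class of $V_-$ along $BF(\mathfrak{s})$ and evaluates the result against the product of the fundamental class of $T$ with an appropriate monomial in the $S^1$-equivariant parameter.

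Next I would translate the cohomological hypothesis into a statement about $V_\pm\to T$. Using the Atiyah--Singer family index theorem for the parameterized Dirac operator, together with the analogous calculation for the positive harmonic $2$-form bundle, one expresses the Chern classes $c_k(V_\pm)\in H^{2k}(T;\mathbb{Z})$ as integer polynomials in the images of the cup products $\delta_i\cup\delta_j\in H^2(X;\mathbb{Z})$ under the canonical slant-product pairing with classes on $T$. Since $H^*(T;\mathbb{Z})$ is torsion-free, every torsion class in $H^2(X;\mathbb{Z})$ contributes zero under this pairing; combined with the hypothesis that each $\delta_i\cup\delta_j$ is torsion or $2$-divisible in $H^2(X;\mathbb{Z})$, this forces all Chern classes $c_k(V_\pm)$ to be divisible by~$2$ in $H^*(T;\mathbb{Z})$.

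The final step is to use the congruence $b_2^+-b_1\equiv 3\pmod 4$ to extract the mod~$2$ vanishing. The integer $SW_X(\mathfrak{s})$ reduces, as a characteristic number, to a polynomial in the Chern classes of $V_\pm$ integrated against $[T]$ and multiplied by a power of the $S^1$-equivariant parameter whose degree is controlled by $d/2$ and the Thom-isomorphism shift. The congruence, together with $d\neq 0$, is designed to guarantee that this polynomial has strictly positive total degree in Chern classes, so that the $2$-divisibility obtained in the previous step forces the entire integral to be even; this yields $SW_X(\mathfrak{s})\equiv 0\pmod 2$.

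The main obstacle I anticipate lies in this last dimension-counting step: identifying the precise characteristic-number formula for $SW_X(\mathfrak{s})$ arising from $BF(\mathfrak{s})$, and showing that, whenever $d\neq 0$, it necessarily involves at least one factor of a Chern class of $V_\pm$. The congruence $b_2^+-b_1\equiv 3\pmod 4$ should enter exactly to rule out degenerate dimensional cancellations that would leave the characteristic number as a pure $S^1$-equivariant degree with no Chern-class input. Making this rigorous requires careful bookkeeping of $S^1$-equivariant dimensions, Thom-isomorphism shifts, and the role of the torsion subgroup of $H^2(X;\mathbb{Z})$; the hard part will be verifying that no parasitic odd mod~$2$ contributions arise from the interplay among these three features.
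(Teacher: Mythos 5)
Your proposed mechanism has a genuine gap at its core: the claim that, whenever $d_X(\mathfrak{s})\neq 0$, the characteristic-number expression for $SW_X(\mathfrak{s})$ extracted from the fiberwise Bauer--Furuta map over the Picard torus must contain a factor of a Chern class of the index bundles $V_\pm$, so that $2$-divisibility of those Chern classes forces evenness. This cannot work, because the mechanism is vacuous precisely in a case where the theorem still has content. When $b_1=0$ the Picard torus is a point, $V_\pm$ are trivial and have no Chern classes at all, and your hypothesis on the $\delta_i\cup\delta_j$ is empty; yet the statement (see Corollary~\ref{intro:cor:b_1<2} with $b_1=0$) still asserts a nontrivial mod $2$ vanishing for every $\mathfrak{s}$ with $d_X(\mathfrak{s})\neq 0$. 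So the parity of $SW_X(\mathfrak{s})$ for $d>0$ is not controlled by divisibility of Chern classes of the parameterizing bundles together with dimension counting; some genuinely deeper input is required. Relatedly, a dimension-counting argument can only ever see parities, so it offers no route by which the hypothesis $b_2^+-b_1\equiv 3\pmod 4$ (a mod $4$ condition) could enter; in the known homotopy-theoretic treatment of the $b_1=0$ case (Bauer--Furuta \cite[Corollary 3.6 and Theorem 3.7]{BF04}) the mod $4$ condition comes from the structure of $S^1$-equivariant stable homotopy groups, not from Thom-shift bookkeeping. There is also a smaller issue in your setup: for $b_1>0$ and $d>0$ the passage from the stable class of $BF(\mathfrak{s})$ to the integer $SW_X(\mathfrak{s})$ is itself delicate (the cohomological image of the stable class determines $SW$ only under additional hypotheses), so even the starting formula you assume needs justification.

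For comparison, the paper argues by contradiction along completely different lines: if $SW_X(\mathfrak{s})$ is odd and $d_X(\mathfrak{s})=2n>0$, blow up $n$ times and use the blow-up formula with $K_n=c_1(\mathfrak{s})+3E_1+\cdots+3E_n$ to reach a pair $(X_n,\mathfrak{s}_n)$ with $d=0$ and odd Seiberg--Witten invariant. Your cohomological hypothesis (which guarantees $\langle c_1(\mathfrak{s})\cup\delta_i\cup\delta_j,[X]\rangle\equiv 0\pmod 4$) together with $b_2^+-b_1\equiv 3\pmod 4$ is exactly what makes $(X_n,\mathfrak{s}_n)$ BF admissible in the sense of Ishida--Sasahira; their connected sum non-vanishing theorem then shows $K_n$ is a Bauer--Furuta basic class, hence a monopole class, of $X_n\#K3$. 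Applying Kronheimer's adjunction inequality to a genus-$g$ surface in the $K3$ summand with self-intersection $2g-2$, tubed to an exceptional sphere, yields $2g\leq 2g-2$, a contradiction. So the mod $4$ hypotheses enter through the admissibility condition of the gluing theorem, and the mod $2$ conclusion comes from geometry (adjunction), not from divisibility of characteristic classes over the Jacobian.
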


This result gives the first examples of topological 4-manifolds such that all smooth structures are of mod 2 Seiberg--Witten simple type, and yet some have non-vanishing (mod 2) Seiberg--Witten invariants. As easily seen, this theorem provides a large class of such topological 4-manifolds. 

\begin{corollary}\label{intro:cor:sum}Let $X$ be a closed, connected, oriented, smooth 4-manifold with $b_2^+-b_1>1$ and $b_2^+-b_1\equiv 3\pmod{4}$. Suppose that the cohomology ring is isomorphic to that of a connected sum of (possibly more than two) closed oriented 4-manifolds, each summand of which satisfies either $b_1\leq 1$ or $b_2=0$. Then $X$ has mod 2 Seiberg--Witten simple type. 
\end{corollary}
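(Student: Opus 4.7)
The plan is to reduce the corollary directly to Theorem \ref{intro:thm:cohomology_ring} by producing, from the ring-theoretic hypothesis, a generating set of $H^1(X;\mathbb{Z})$ whose pairwise cup products are torsion (and therefore, in particular, torsion or divisible by $2$). The numerical hypotheses $b_2^+(X)-b_1(X)>1$ and $b_2^+(X)-b_1(X)\equiv 3\pmod 4$ are already in place, so verifying the cup-product condition is the only remaining task.

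Fix a ring isomorphism $\varphi\colon H^*(X;\mathbb{Z})\xrightarrow{\cong} H^*(Y_1\#\cdots\#Y_n;\mathbb{Z})$ where each $Y_l$ satisfies $b_1(Y_l)\leq 1$ or $b_2(Y_l)=0$. I would first recall that for a connected sum of closed oriented $4$-manifolds there are natural direct sum decompositions
\[
H^k(Y_1\#\cdots\#Y_n;\mathbb{Z})\cong\bigoplus_{l=1}^{n}H^k(Y_l;\mathbb{Z}) \qquad (0<k<4),
\]
and, because $1+1<4$, the cup product $H^1\otimes H^1\to H^2$ respects these decompositions in the sense that classes supported on different summands cup to zero. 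Pulling back along $\varphi$, I take as my generating set of $H^1(X;\mathbb{Z})$ the union, over $l$, of lifts $\delta_i^{(l)}$ of chosen generators of $H^1(Y_l;\mathbb{Z})$.

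Then I would verify the cup-product condition case by case. For $\delta_i^{(l)}$ and $\delta_j^{(m)}$ with $l\neq m$ the cup product vanishes by the previous remark. For $l=m$: if $b_2(Y_l)=0$ then $H^2(Y_l;\mathbb{Z})$ is pure torsion, so the cup product is automatically torsion; if instead $b_1(Y_l)\leq 1$, then the free part of $H^1(Y_l;\mathbb{Z})$ has rank at most one, so at most one of my generators, call it $\alpha$, is of infinite order and the remainder are torsion. Any cup product involving a torsion generator is itself torsion, while graded commutativity of cup products on $H^1$ forces $\alpha\cup\alpha$ to be $2$-torsion. In every case the product is torsion, so Theorem \ref{intro:thm:cohomology_ring} applies and $X$ has mod 2 Seiberg--Witten simple type. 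The only thing to watch, and it is mild, is the cross-summand vanishing statement for the cup product, for which the degree inequality $1+1<4$ is essential; beyond this, no input outside of Theorem \ref{intro:thm:cohomology_ring} and standard graded-commutativity is needed.
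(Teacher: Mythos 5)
Your proof is correct and follows essentially the paper's route: pull the summand-wise generators of $H^1$ back through the ring isomorphism, note that cross-summand cup products vanish in the connected sum, and verify the hypothesis of Theorem~\ref{intro:thm:cohomology_ring} summand by summand (torsion target when $b_2=0$, rank $\leq 1$ when $b_1\leq 1$). The only, harmless, difference is that for the $b_1\leq 1$ summands the paper invokes Lemma~\ref{proof:lem:cup} (a geometric argument showing the products are actually zero), whereas you use the weaker but sufficient algebraic fact that $\alpha\cup\alpha$ is $2$-torsion by graded commutativity, which is all Theorem~\ref{intro:thm:cohomology_ring} requires.
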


\begin{corollary}\label{intro:cor:b_1<2}
Every closed, connected, oriented, smooth 4-manifold with $b_2^+>1$, $b_2^+-b_1\equiv 3\pmod{4}$ and $b_1\leq 1$ has mod 2 Seiberg--Witten simple type. 
\end{corollary}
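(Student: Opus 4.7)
The plan is to derive this corollary directly from Theorem \ref{intro:thm:cohomology_ring} by exhibiting, in either case $b_1=0$ or $b_1=1$, a generating set of $H^1(X;\mathbb{Z})$ whose pairwise cup products are all torsion (and hence trivially satisfy the hypothesis of the theorem).

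First I would dispose of the numerical hypotheses. The assumption $b_2^+-b_1\equiv 3\pmod 4$ forces $b_2^+-b_1\geq 3>1$, so the Betti number hypothesis of Theorem \ref{intro:thm:cohomology_ring} is automatic. Next I would treat the case $b_1=0$: here $H^1(X;\mathbb{Z})$ is pure torsion, so for any generating set $\{\delta_1,\dots,\delta_k\}$ each $\delta_i$ satisfies $n_i\delta_i=0$ for some $n_i\geq 1$, whence $n_i(\delta_i\cup\delta_j)=0$ and every cup product is torsion.

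The case $b_1=1$ is the only one that requires a small argument. Decompose $H^1(X;\mathbb{Z})=\mathbb{Z}\langle\alpha\rangle\oplus T$ with $T$ the torsion subgroup, and take $\alpha$ together with a generating set of $T$ as our generators $\{\delta_i\}$. Any cup product involving a torsion generator is killed by the order of that generator, so it remains only to check $\alpha\cup\alpha$. But graded commutativity on degree-one classes gives $\alpha\cup\alpha=-\alpha\cup\alpha$, so $2(\alpha\cup\alpha)=0$ and $\alpha\cup\alpha$ is 2-torsion. Thus every $\delta_i\cup\delta_j$ is torsion, and Theorem \ref{intro:thm:cohomology_ring} applies.

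There is no real obstacle here; the only point that requires attention is the graded commutativity observation for the lone non-torsion generator when $b_1=1$. Everything else is bookkeeping with the decomposition of $H^1(X;\mathbb{Z})$.
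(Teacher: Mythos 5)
Your proof is correct, and it reaches the same reduction as the paper (apply Theorem~\ref{intro:thm:cohomology_ring} after checking its hypotheses), but the key step is handled differently. The paper proves a stronger statement, Lemma~\ref{proof:lem:cup}: for $b_1\leq 1$ every cup product $\gamma\cup\delta$ of degree-one integral classes is actually \emph{zero}, shown geometrically by representing $PD(\gamma)$ by a closed codimension-one submanifold with trivial normal bundle, so that $PD(\gamma\cup\gamma)$ is represented by the empty set; it also notes via the universal coefficient theorem that $H^1(X;\mathbb{Z})$ is torsion-free, so the whole group is $0$ or $\mathbb{Z}$ and only the square of a generator needs to be checked. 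You instead argue purely algebraically: torsion generators give torsion cup products, and graded commutativity gives $2(\alpha\cup\alpha)=0$, so every product is torsion, which is all that the hypothesis ``torsion or divisible by $2$'' of Theorem~\ref{intro:thm:cohomology_ring} requires. Your route is more elementary and avoids any geometry, at the cost of a weaker (but sufficient) conclusion; your bookkeeping with a torsion subgroup of $H^1(X;\mathbb{Z})$ is harmless but vacuous, since that group is free abelian for a closed oriented $4$-manifold. One small imprecision: the congruence $b_2^+-b_1\equiv 3\pmod 4$ alone does not force $b_2^+-b_1\geq 3$; you also need $b_2^+>1$ and $b_1\leq 1$ to get $b_2^+-b_1\geq 1$ first, exactly as the paper does, and your argument implicitly uses this.
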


We note that there are many 4-manifolds with non-vanishing mod 2 Seiberg--Witten invariants satisfying the assumption of Corollary~\ref{intro:cor:sum}. See, for example, \cite{GS99,BK07,Pa07,ABBKP10,To11,Yaz13,To14}. 
Also, the normal connected sum formula \cite[Corollary~3.3]{MST96} provides many 4-manifolds with non-vanishing mod 2 Seiberg--Witten invariants for which the mod 2 simple type conjecture is difficult to prove (without our results). 

\begin{remark}The $b_1=0$ case of Corollary~\ref{intro:cor:b_1<2} can be alternatively derived from results of Bauer and Furuta~\cite[Corollary 3.6 and Theorem 3.7]{BF04}. We note that the proofs of these results are homotopy theoretic and hence very different from ours. 
\end{remark}

We give simple applications of these results. We first discuss adjunction inequalities. A second cohomology class $K$ of a 4-manifold $X$ will be called a mod 2 Seiberg--Witten basic class if there exists a spin$^c$ structure $\mathfrak{s}$ on $X$ satisfying $K=c_1(\mathfrak{s})$ and $SW_X(\mathfrak{s})\not \equiv 0\pmod{2}$. Here we assume that every immersed sphere intersects itself only at transverse double points. Due to the generalized adjunction formula of Fintushel and Stern~\cite{FS95}, Theorem~\ref{intro:thm:cohomology_ring} implies the following adjunction inequality for immersed spheres. 

\begin{theorem}\label{intro:thm:immersed}Let $X$ be a closed, connected, oriented, smooth 4-manifold satisfying the assumption of Theorem~\ref{intro:thm:cohomology_ring}. Suppose that a second homology class $\alpha$ is represented by an immersed sphere having exactly $p_+$ positive double points and $p_-$ negative double points. If $p_+>0$ and $\alpha\cdot \alpha<0$, then any mod 2 Seiberg--Witten basic class $K$ satisfies 
\begin{equation*}
\left|\langle K, \alpha \rangle\right|+\alpha\cdot \alpha\leq 2p_+-2.
\end{equation*}
\end{theorem}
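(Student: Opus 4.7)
The plan is to derive Theorem~\ref{intro:thm:immersed} as a direct consequence of Theorem~\ref{intro:thm:cohomology_ring} together with the generalized adjunction formula of Fintushel--Stern \cite{FS95}. First I would apply Theorem~\ref{intro:thm:cohomology_ring} to $X$: since its hypotheses are assumed, $X$ has mod 2 Seiberg--Witten simple type. Consequently, any mod 2 basic class $K=c_1(\mathfrak{s})$ satisfies $d_X(\mathfrak{s})=0$, which is exactly the input required to invoke the Fintushel--Stern machinery.

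The second step is to apply the Fintushel--Stern adjunction inequality for immersed spheres. For a smoothly immersed sphere representing a class $\alpha$ with $\alpha \cdot \alpha < 0$ and having $p_+ > 0$ positive and an arbitrary number $p_-$ of negative double points, their inequality states that every Seiberg--Witten basic class $K$ satisfies
\[
\left| \langle K, \alpha \rangle \right| + \alpha \cdot \alpha \;\leq\; 2 p_+ - 2 .
\]
The essential feature, which is precisely the ``generalized'' content of the Fintushel--Stern formula, is that $p_-$ does not appear on the right-hand side; this is what allows a naive bookkeeping via blow-ups at all double points (which would produce $2p_+ + 2p_- - 2$) to be improved. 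The hypotheses $\alpha \cdot \alpha < 0$ and $p_+>0$ in our statement are exactly those required by Fintushel--Stern, so plugging the given immersed sphere and any mod 2 basic class $K$ into their inequality yields the desired bound.

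The only point that requires verification is that the Fintushel--Stern argument adapts verbatim to the mod 2 setting. Their proof reduces to a count of points in a cut-down Seiberg--Witten moduli space together with an Euler-class obstruction, both of which are perfectly well-defined mod 2; mod 2 simple type, as provided by Theorem~\ref{intro:thm:cohomology_ring}, is exactly the property needed to run the dimension-counting step modulo $2$. I therefore expect this verification to be routine rather than the main obstacle: the substantive work is concentrated in Theorem~\ref{intro:thm:cohomology_ring}, and Theorem~\ref{intro:thm:immersed} is essentially obtained by direct citation of \cite{FS95}.
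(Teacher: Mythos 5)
Your high-level plan---derive the theorem from Theorem~\ref{intro:thm:cohomology_ring} plus Fintushel--Stern---is the right one and matches the paper, but the step you defer as ``routine'' is exactly where the content of the deduction lies, and as you have set it up it does not go through by direct citation. The adjunction inequality for immersed spheres of \emph{negative} self-intersection is not an unconditional statement in \cite{FS95}: in that range it is obtained only under the (integer) simple type hypothesis, and it concerns integer basic classes, whereas here you have only \emph{mod 2} simple type and a \emph{mod 2} basic class. So you cannot simply plug $K$ and $\alpha$ into ``their inequality.'' Your proposed repair---rerunning the Fintushel--Stern proof with $\mathbb{Z}/2$ coefficients, described as a count of points in a cut-down moduli space plus an Euler-class obstruction---both mischaracterizes their argument (it proceeds via the blow-up formula and relations among the integer Seiberg--Witten invariants of shifted spin$^c$ structures, not a single cut-down count) and is left entirely unverified, which is a genuine gap.

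The paper closes this gap without adapting any gauge theory to mod 2 coefficients: argue by contradiction. If $\left|\langle K, \alpha \rangle\right|+\alpha\cdot\alpha>2p_+-2$ for some mod 2 basic class $K=c_1(\mathfrak{s})$, then the generalized adjunction formula \cite[Theorem 1.3]{FS95}---an identity among the \emph{integer} invariants, hence valid modulo 2 for free---shows that $K'=K+2\epsilon PD(\alpha)$ is again a mod 2 basic class, where $\epsilon=\pm 1$ is the sign of $\langle K,\alpha\rangle$. A direct computation gives $d_X(\mathfrak{s}')=d_X(\mathfrak{s})+\left|\langle K, \alpha \rangle\right|+\alpha\cdot\alpha>0$, contradicting the mod 2 simple type supplied by Theorem~\ref{intro:thm:cohomology_ring}. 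The missing idea in your write-up is precisely this shift-and-dimension-count contradiction: once you invoke the Fintushel--Stern result in its ``formula'' form rather than its ``inequality'' form, the passage to mod 2 is immediate and no re-examination of their moduli-space argument is needed.
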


We note that this theorem holds for any 4-manifold satisfying the assumption of Corollary~\ref{intro:cor:sum}. For embedded surfaces, Corollary~\ref{intro:cor:b_1<2} implies the following adjunction inequality due to the generalized adjunction formula of Ozsv\'{a}th and Szab\'{o}~\cite{OzSz00Ann}. 

\begin{theorem}\label{intro:thm:embedded}Let $X$ be a closed, connected, oriented, smooth 4-manifold with $b_2^+>1$, $b_2^+-b_1\equiv 3\pmod{4}$ and $b_1\leq 1$. Suppose that a second homology class $\alpha$ is represented by a smoothly embedded, closed, oriented surface of genus $g$. If $g>0$ and $\alpha\cdot \alpha<0$, then any mod 2 Seiberg--Witten basic class $K$ satisfies 
\begin{equation*}
\left|\langle K, \alpha \rangle\right|+\alpha\cdot \alpha\leq 2g-2.
\end{equation*}
\end{theorem}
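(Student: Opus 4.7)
The proof should be essentially a direct combination of Corollary~\ref{intro:cor:b_1<2} with the generalized adjunction inequality of Ozsv\'{a}th--Szab\'{o}~\cite{OzSz00Ann}. The hypotheses on $(b_2^+,b_1)$ in Theorem~\ref{intro:thm:embedded} are exactly those of Corollary~\ref{intro:cor:b_1<2}, so the first step is to quote that corollary to deduce that $X$ is of mod 2 Seiberg--Witten simple type.

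Next, I would unpack the hypothesis that $K$ is a mod 2 Seiberg--Witten basic class. By definition, there exists a spin$^c$ structure $\mathfrak{s}$ on $X$ with $c_1(\mathfrak{s})=K$ and $SW_X(\mathfrak{s})\not\equiv 0\pmod 2$; in particular $SW_X(\mathfrak{s})\neq 0$ as an integer. Combining this non-vanishing with the mod 2 simple type established in the previous step forces $d_X(\mathfrak{s})=0$. Thus $\mathfrak{s}$ is an honest (integer-valued) Seiberg--Witten basic class of simple type.

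At this stage, I can invoke the generalized adjunction inequality of Ozsv\'{a}th--Szab\'{o} for smoothly embedded oriented surfaces of positive genus with negative self-intersection. Applied to our $\alpha$ (with $g>0$ and $\alpha\cdot\alpha<0$) and the spin$^c$ structure $\mathfrak{s}$ (with $SW_X(\mathfrak{s})\neq 0$ and $d_X(\mathfrak{s})=0$), this yields
\begin{equation*}
|\langle c_1(\mathfrak{s}),\alpha\rangle|+\alpha\cdot\alpha\leq 2g-2,
\end{equation*}
which is the desired inequality with $K=c_1(\mathfrak{s})$.

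The real content of the argument lies elsewhere in the paper: the mod 2 simple type statement of Corollary~\ref{intro:cor:b_1<2} is what converts a purely mod 2 hypothesis on $K$ into a hypothesis to which the Ozsv\'{a}th--Szab\'{o} inequality can be applied, and the latter inequality is classically formulated for integer-valued Seiberg--Witten basic classes of simple type. So there is no genuine obstacle once Corollary~\ref{intro:cor:b_1<2} and the Ozsv\'{a}th--Szab\'{o} theorem are in hand; the only point requiring a moment of care is the passage from the mod 2 non-vanishing to the integer non-vanishing and the use of simple type to eliminate the virtual dimension term.
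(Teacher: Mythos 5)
Your first two steps match the intended use of Corollary~\ref{intro:cor:b_1<2}: mod 2 simple type plus $SW_X(\mathfrak{s})\not\equiv 0\pmod 2$ does force $d_X(\mathfrak{s})=0$. The gap is in your final step. The Ozsv\'ath--Szab\'o adjunction inequality for surfaces of negative self-intersection is \emph{not} a per-class statement whose hypotheses are ``$SW_X(\mathfrak{s})\neq 0$ and $d_X(\mathfrak{s})=0$''; its hypothesis is that the manifold $X$ is of simple type (indeed of simple type in the strong sense that the function $SW_{X,\mathfrak{s}}$ vanishes identically whenever $d_X(\mathfrak{s})\neq 0$), a condition on \emph{all} spin$^c$ structures. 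Mod 2 simple type does not yield this: a class with, say, $SW=2$ in positive dimension is perfectly compatible with mod 2 simple type but violates integer simple type. This matters because the proof of the conditional inequality works by contradiction: if the inequality fails, one only produces a new spin$^c$ structure $\mathfrak{s}+\epsilon\,PD(\alpha)$ whose (refined) invariant is nonzero in positive virtual dimension, and without a simple type hypothesis on $X$ that is not yet a contradiction. So there is no theorem of the form you invoke, and knowing that the single class $\mathfrak{s}$ has $d_X(\mathfrak{s})=0$ does not license the citation; your phrase ``basic class of simple type'' conflates a property of one class with a property of the manifold.

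The paper closes exactly this gap by using the generalized adjunction \emph{formula} of \cite{OzSz00Ann} (Theorem 1.3 there), a relation between invariants rather than an inequality: if $|\langle K,\alpha\rangle|+\alpha\cdot\alpha>2g-2$, then $SW_{X,\mathfrak{s}+\epsilon\alpha}(\xi(\epsilon\Sigma)U^m)=SW_{X,\mathfrak{s}}(1)$ with $2m=|\langle K,\alpha\rangle|+\alpha\cdot\alpha-2g$. Since $b_1\leq 1$, the products $A_i\cdot B_i$ vanish, so $\xi(\Sigma)=U^g$ and the left-hand side is the ordinary invariant $SW_X(\mathfrak{s}+\epsilon\alpha)$; hence $SW_X(\mathfrak{s}+\epsilon\alpha)\equiv SW_X(\mathfrak{s})\equiv 1\pmod 2$ while $d_X(\mathfrak{s}+\epsilon\alpha)=d_X(\mathfrak{s})+|\langle K,\alpha\rangle|+\alpha\cdot\alpha>0$, contradicting Corollary~\ref{intro:cor:b_1<2}. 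Note that mod 2 simple type is applied to the \emph{new} class produced by the relation, and that $b_1\leq 1$ is used a second time (to get $\xi(\Sigma)=U^g$), a point your outline does not engage with. Your argument becomes correct once the appeal to the ``adjunction inequality'' is replaced by this relation-plus-contradiction step.
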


We next discuss the following conjecture, which states that the choice of an orientation of a 4-manifold imposes a strong constraint on the Seiberg--Witten invariant. 

\begin{conjecture}[cf.\ Kotschick~\cite{Ko92}, see \cite{Dra97}]\label{intro:conj:orientation}
Every simply connected, closed, oriented, smooth 4-manifold with $b_2^+>1$ and $b_2^->1$ has the vanishing Seiberg--Witten invariant for at least one orientation. 
\end{conjecture}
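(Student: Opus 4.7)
The plan is to approach Conjecture~\ref{intro:conj:orientation} by contradiction, combining the mod 2 simple type results of this paper with the orientation-reversing symmetry of a simply connected 4-manifold. Since $X$ is simply connected, $b_1=0$, so the hypothesis $b_2^+-b_1 \equiv 3 \pmod 4$ of Corollary~\ref{intro:cor:b_1<2} reduces to $b_2^+ \equiv 3 \pmod 4$. Because orientation reversal exchanges $b_2^+$ and $b_2^-$, Corollary~\ref{intro:cor:b_1<2} applies to $X$ or to $\bar X$ whenever at least one of $b_2^+, b_2^-$ is congruent to $3$ modulo $4$, giving mod 2 simple type for that orientation as a first input.

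Assuming for contradiction that both $SW_X$ and $SW_{\bar X}$ are non-trivial integrally, the fundamental inequality applied to both orientations produces basic classes $K_+, K_- \in H^2(X;\mathbb{Z})$ with
\begin{equation*}
K_+^2 \geq 2\chi(X) + 3\sigma(X), \qquad K_-^2 \leq 3\sigma(X) - 2\chi(X).
\end{equation*}
When the parity condition above holds, mod 2 simple type forces the virtual dimension of the relevant basic classes to vanish, upgrading these inequalities toward equalities on the mod 2 level and, via Wu's formula, forcing $K_\pm$ to be characteristic in a controlled way. I would then combine this with the adjunction inequality for embedded surfaces of negative self-intersection (Theorem~\ref{intro:thm:embedded}), applied simultaneously to $K_+$ and $K_-$ using the orientation-reversing involution of $H^2(X;\mathbb{Z})$, to derive constraints on the intersection form of $X$ that are incompatible with $b_2^\pm>1$.

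The main obstacle is twofold. First, Conjecture~\ref{intro:conj:orientation} concerns integer Seiberg--Witten invariants, while this paper delivers only mod 2 information; lifting a mod 2 vanishing statement to an integer vanishing statement requires essentially new input, since a mod 2 basic class need not correspond to a single integer basic class. Second, the cases in which neither $b_2^+$ nor $b_2^-$ is congruent to $3$ modulo $4$ lie entirely outside the reach of the parity-based argument of Theorem~\ref{intro:thm:cohomology_ring}, and would plausibly require the \textit{Pin}(2)-equivariant Bauer--Furuta stable cohomotopy invariant, whose sensitivity to orientation reversal is genuinely finer than that of the scalar Seiberg--Witten invariant. For these reasons I expect that the methods developed here will establish only a restricted mod 2 analogue of Conjecture~\ref{intro:conj:orientation} (under topological hypotheses, as foreshadowed by the abstract), rather than the integer statement as posed, which remains a genuinely open problem in the subject.
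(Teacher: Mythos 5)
You are right about the essential point: the statement you were given is a \emph{conjecture}, and the paper does not prove it --- nor do you. Your honest conclusion that the methods here yield only a mod 2 analogue under additional topological hypotheses, while the integer statement remains open, is exactly the paper's own stance (Theorem~\ref{intro:thm:orientation} is that analogue), so there is no hidden proof you failed to find. The obstacles you name (mod 2 versus integer information, and the parity cases $b_2^\pm\not\equiv 3\pmod 4$, which the paper handles only because $b_2^+-b_1$ even forces $SW\equiv 0$ by definition) are genuine.

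However, the middle of your sketch --- extracting basic classes $K_\pm$ with $K_+^2\geq 2\chi+3\sigma$ and $K_-^2\leq 3\sigma-2\chi$, then playing adjunction inequalities against the intersection form --- is not how the paper obtains even the mod 2 statement, and it would not work: mod 2 simple type only constrains spin$^c$ structures with $d_X(\mathfrak{s})\neq 0$, so it gives you no leverage against basic classes sitting at $d=0$, and Theorem~\ref{intro:thm:embedded} requires a surface of positive genus and negative self-intersection that your argument never produces. The paper's actual mechanism for killing the $d=0$ classes on one orientation is entirely different: Theorem~\ref{proof:thm:orientation_d=0}, implicit in \cite{Y19}, uses the Bauer--Furuta connected sum formula of \cite{IS15} together with a non-torsion class represented by a 2-handle neighborhood (supplied, in the simply connected setting, by geometric simple connectivity via Lemma~\ref{proof:lem:geometrically}) to show that for at least one orientation every spin$^c$ structure with $d=0$ has $SW\equiv 0\pmod 2$; only then does Theorem~\ref{intro:thm:cohomology_ring} finish the $d\neq 0$ classes. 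That input from stable cohomotopy refinements is the missing ingredient in your plan, and it is also why the paper needs ``geometrically simply connected'' rather than merely ``simply connected.''
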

Kotschick~\cite{Ko97} proved this conjecture for a large class of complex surfaces (see also \cite{Dra97}). 
We remark that this conjecture has counterexamples if we remove the simply connected condition (e.g.\ the 4-torus). To state our result, let us recall that a compact, connected, smooth manifold is called \textit{geometrically simply connected} if it admits a handle decomposition without 1-handles. We note that a geometrically simply connected manifold is simply connected.  Also, we say that a 4-manifold $X$ has the \textit{vanishing mod 2 Seiberg--Witten invariant} if $SW_X(\mathfrak{s})\equiv 0\pmod {2}$ for any spin$^c$ structure $\mathfrak{s}$ on $X$. In \cite{Y19}, the third author showed that every geometrically simply connected, closed 4-manifold with $b_2^+\not\equiv 1$ and $b_2^-\not\equiv 1\pmod{4}$ admits no symplectic structure for at least one orientation. Improving  this result, Corollary~\ref{intro:cor:b_1<2} implies the mod 2 version of Conjecture~\ref{intro:conj:orientation} under a mild condition. 

\begin{theorem}\label{intro:thm:orientation}Every geometrically simply connected, closed, oriented, smooth 4-manifold with $b_2^+\not\equiv 1$ and $b_2^-\not\equiv 1\pmod{4}$ has the vanishing mod 2 Seiberg--Witten invariant for at least one orientation. 
\end{theorem}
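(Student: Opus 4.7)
Since $X$ is geometrically simply connected it is simply connected, so $b_1(X)=0$. The plan is to split by the parities of $b_2^{\pm}(X)$: the subcases in which one of $b_2^{\pm}$ is even are handled immediately by the definition of the Seiberg--Witten invariant, while the remaining subcase is reduced, via Corollary~\ref{intro:cor:b_1<2}, to the handle-theoretic obstruction already developed in \cite{Y19}.

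First, suppose $b_2^+(X)$ is even. Since $b_1(X)=0$, this forces $b_2^+(X)-b_1(X)\equiv 0\pmod 2$, and so by the definition recalled in the introduction $SW_X(\mathfrak{s})=0$ for every spin$^c$ structure $\mathfrak{s}$ on $X$; in particular the mod $2$ invariant vanishes on the orientation $X$. If instead $b_2^-(X)$ is even, the same argument applied to $\bar X$ gives the conclusion on $\bar X$. Combined with the hypothesis $b_2^{\pm}(X)\not\equiv 1\pmod 4$, the only remaining case is $b_2^+(X)\equiv b_2^-(X)\equiv 3\pmod 4$, in which both $b_2^{\pm}$ are at least $3$.

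In this remaining case, Corollary~\ref{intro:cor:b_1<2} applies to each of $X$ and $\bar X$ (using $b_1=0\leq 1$, $b_2^+>1$, and $b_2^+-b_1\equiv 3\pmod 4$ on both sides), so both orientations have mod $2$ Seiberg--Witten simple type. Assume for contradiction that neither orientation has the vanishing mod $2$ Seiberg--Witten invariant. Then there exist mod $2$ basic classes $\mathfrak{s}$ on $X$ and $\bar{\mathfrak{t}}$ on $\bar X$, and by simple type both have virtual dimension zero.

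The final step, which I expect to be the main obstacle, is to extract a contradiction from the geometric simple connectedness of $X$ together with the existence of these mod $2$ basic classes of virtual dimension zero on both orientations. For this I would appeal directly to the argument of \cite{Y19}, which establishes the analogous non-existence statement for symplectic structures on both orientations of such an $X$ under exactly the same parity hypotheses. In that argument the symplectic assumption is used only to produce basic classes of virtual dimension zero on each orientation, via Taubes' theorem together with the automatic simple type of symplectic $4$-manifolds; the subsequent obstruction is purely handle-theoretic and depends only on this basic-class data. In our setting, Corollary~\ref{intro:cor:b_1<2} plays the role of Taubes' theorem, supplying mod $2$ simple type, and the assumed non-vanishing of the mod $2$ invariants supplies the basic classes of virtual dimension zero. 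The remaining work is therefore to verify that the obstruction of \cite{Y19} is insensitive to whether the simple type and non-vanishing of $SW$ are integral (as in the symplectic case) or only mod $2$ (as furnished here by Corollary~\ref{intro:cor:b_1<2}); since that obstruction is topological in nature, this check should succeed and complete the proof.
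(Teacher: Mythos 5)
Your proposal is correct and follows essentially the same route as the paper: mod 2 simple type from Corollary~\ref{intro:cor:b_1<2} reduces everything to spin$^c$ structures with $d=0$, and the final step you flag is exactly what the paper isolates as Theorem~\ref{proof:thm:orientation_d=0} (combined with Lemma~\ref{proof:lem:geometrically}), extracted from the proof of \cite[Theorem~2.4]{Y19}, whose input is indeed only mod~2 non-vanishing at $d=0$ because BF admissibility and Bauer's detection of the Bauer--Furuta invariant are mod~2 conditions. One small correction: the obstruction in \cite{Y19} is not ``purely handle-theoretic'' or merely ``topological''---it is a Bauer--Furuta connected-sum argument applied to $X$ and $\overline{X}$---but your essential point, that only the parity of the $d=0$ invariants is used there, is right and is precisely what the paper asserts.
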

We note that many simply connected, closed 4-manifolds including a large class of complex surfaces are geometrically simply connected (see \cite{GS99,Y19}). If this theorem does not hold without the condition ``geometrically'', then this theorem guarantees the existence of a counterexample to a long-standing open problem whether every simply connected, closed, smooth 4-manifold is geometrically simply connected (\cite[Problem 4.18]{Kir78}). For background on this problem, we refer to \cite{Y19}. In fact, we prove this theorem under a more general condition, which holds for many 4-manifolds including non-simply connected ones. Furthermore, this condition is much easier to verify. See Theorem~\ref{proof:thm:orientation}. 
\section{Proofs}
\subsection{Mod 2 Seiberg--Witten simple type}For a closed, connected, oriented 4-manifold $X$, let $[X]$ denote the fundamental class of $X$. We first prove the following theorem. 
\begin{theorem}\label{proof:thm:spinc}Let $X$ be a closed, connected, oriented, smooth 4-manifold with $b_2^+-b_1>1$ and $b_2^+-b_1\equiv 3\pmod{4}$, and let $\{\delta_1, \delta_2, \dots, \delta_k\}$ be a generating set of $H^1(X;\mathbb{Z})$. Suppose that a spin$^c$ structure $\mathfrak{s}$ on $X$ satisfies the following conditions. 
\begin{itemize}
 \item $SW_X(\mathfrak{s})\equiv 1\pmod {2}$. 
 \item $\langle c_1(\mathfrak{s})\cup \delta_i\cup \delta_j, [X] \rangle\equiv 0\pmod {4}$ for any $i,j$.
\end{itemize}
Then $d_X(\mathfrak{s})= 0$. 
\end{theorem}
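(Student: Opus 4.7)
The plan is to extend the $b_1=0$ Bauer--Furuta theorem \cite[Cor.~3.6, Thm.~3.7]{BF04} to $b_1>0$ by working with the $T$-parametrized $S^1$-equivariant Bauer--Furuta invariant, where $T=H^1(X;\mathbb{R})/H^1(X;\mathbb{Z})$ is the Jacobian torus. First I would set up the parametrized Bauer--Furuta invariant $\psi(X,\mathfrak{s})$ as a stable $S^1$-equivariant map between $T$-parametrized virtual representation spheres; the equivariant degree of $\psi$, after pairing with the top cohomology class $\delta_1\smile\cdots\smile\delta_{b_1}\in H^{b_1}(T;\mathbb{Z})$, recovers the integer invariant $SW_X(\mathfrak{s})$, so that $SW_X(\mathfrak{s})\bmod 2$ becomes the image in a specific $\mathbb{F}_2$-quotient of an $S^1$-equivariant stable stem.

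Next I would apply the families index theorem to the parametrized Dirac operator over $T$. The Chern character of the index bundle $\mathrm{ind}\,D\to T$ expands into pairings of products of $c_1(\mathfrak{s})$ and the $\delta_i$'s against $[X]$; in particular its degree-$2$ component is a linear combination of the triple products $\langle c_1(\mathfrak{s})\smile\delta_i\smile\delta_j,[X]\rangle$. The hypothesis that each such triple product is $\equiv 0\pmod 4$ then forces the low-degree Chern classes of $\mathrm{ind}\,D$ to vanish mod $4$.

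I would use this mod-$4$ vanishing to show that, reduced modulo $2$, the parametrized invariant $\psi$ is equivalent to its constant-family analog, i.e.\ the $b_1=0$ Bauer--Furuta invariant of a configuration with the same $d_X(\mathfrak{s})$ and $b_2^+$. Invoking \cite[Cor.~3.6, Thm.~3.7]{BF04} under the parity hypothesis $b_2^+-b_1\equiv 3\pmod 4$ should then yield: whenever $d_X(\mathfrak{s})\neq 0$, the mod-$2$ SW invariant vanishes. Combined with the assumption $SW_X(\mathfrak{s})\equiv 1\pmod 2$, this forces $d_X(\mathfrak{s})=0$.

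The hard step, I expect, is this final reduction: converting the mod-$4$ vanishing of the Chern classes of $\mathrm{ind}\,D$ over $T$ into a mod-$2$ equivalence of parametrized $S^1$-equivariant stable maps with a constant family. I anticipate that this requires an equivariant $K$-theoretic (or Atiyah--Hirzebruch) argument on $T_+$, and it is precisely here that the parity condition $b_2^+-b_1\equiv 3\pmod 4$ enters, by making the relevant mod-$2$ equivariant obstructions detectable. The preceding steps are essentially bookkeeping with the parametrized Bauer--Furuta construction and the families index theorem.
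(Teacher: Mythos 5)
Your proposal has a genuine gap at exactly the point you flag as ``the hard step'': no argument is given for converting the mod-4 vanishing of the triple products $\langle c_1(\mathfrak{s})\cup\delta_i\cup\delta_j,[X]\rangle$ into a mod-2 identification of the $T$-parametrized Bauer--Furuta invariant with a constant-family ($b_1=0$) model to which \cite[Cor.~3.6, Thm.~3.7]{BF04} applies. That identification is not bookkeeping; it would amount to a new families vanishing theorem over the Picard torus (the kind of statement whose $d=0$ analogues occupy the Ishida--Sasahira paper \cite{IS15}, where this same mod-4 triple-product condition appears as part of ``BF admissibility''), and it is precisely where all the content of the theorem would live. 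As written, the plan also has bookkeeping problems: pairing the parametrized degree with $\delta_1\smile\cdots\smile\delta_{b_1}$ is not how $SW_X(\mathfrak{s})$ is extracted when $d_X(\mathfrak{s})=2n>0$ (one cuts down by the $U$-class, not by torus classes), and the ``constant-family analog with the same $b_2^+$'' is the wrong comparison, since after accounting for the $b_1$-dimensional base the role of $b_2^+$ is played by $b_2^+-b_1$; both points matter because the parity hypothesis is on $b_2^+-b_1$.

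For contrast, the paper avoids proving any new parametrized vanishing theorem. Assuming $d_X(\mathfrak{s})=2n>0$, it blows up $n$ times and replaces $c_1(\mathfrak{s})$ by $K+3E_1+\cdots+3E_n$, reducing to a pair $(X_n,\mathfrak{s}_n)$ with $d_{X_n}(\mathfrak{s}_n)=0$ and odd SW invariant; the mod-4 cup-product hypothesis and $b_2^+-b_1\equiv 3\pmod 4$ enter only to verify that $(X_n,\mathfrak{s}_n)$ is BF admissible in the sense of \cite{IS15}. The Ishida--Sasahira connected sum theorem then makes $K_n$ a Bauer--Furuta basic class, hence a monopole class, of $Z=X_n\#K3$, and the contradiction comes geometrically: a genus-$g$ surface in the $K3$ with square $2g-2$, tubed with the exceptional sphere $e_1$, violates Kronheimer's adjunction inequality for monopole classes. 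If you want to pursue your route, you would need to either prove the families reduction you anticipate (a substantial project) or, more realistically, quote \cite{IS15} as the paper does and find a mechanism (like the blow-up plus adjunction contradiction) that converts ``$d_X(\mathfrak{s})>0$'' into a contradiction, since no direct $b_1>0$ analogue of \cite[Thm.~3.7]{BF04} is available off the shelf.
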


\begin{proof}Suppose, to the contrary, that $d_X(\mathfrak{s})\neq 0$. Put $K=c_1(\mathfrak{s})$. Due to the assumption $SW_X(\mathfrak{s})\neq 0$, it follows from the definition of $SW_X(\mathfrak{s})$ that $d_X(\mathfrak{s})=2n$ for some positive integer $n$ (e.g.\ \cite[Section 2.3]{Ni00}). Let $X_n$ be the 4-manifold $X\#n\overline{\mathbb{C}\mathbb{P}^2}$, and let $K_n$ be its second cohomology class defined by $K_n=K+3E_1+3E_2+\dots+3E_n$, where each $E_i$ denotes the Poincar\'{e} dual of the second homology class $e_i$ of the $i$-th $\overline{\mathbb{C}\mathbb{P}^2}$ represented by the exceptional sphere. By the blow-up formula (\cite{FS95,Ni00}), we have a spin$^c$ structure $\mathfrak{s}_n$ on $X_n$ satisfying $c_1(\mathfrak{s}_n)=K_n$ and $SW_{X_n}(\mathfrak{s}_n)=SW_{X}(\mathfrak{s})\equiv 1\pmod {2}$. We note that $d_{X_n}(\mathfrak{s}_n)=0$.  Hence, we see that $(X_n, \mathfrak{s}_n)$ is BF admissible in the sense of Ishida and Sasahira~\cite[Definition 2]{IS15}, due to the assumption on $(X, \mathfrak{s})$. (We remark that the ``mod 2'' non-vanishing condition is a part of the definition of BF admissibility.) One can also check that $(K3, \mathfrak{t})$ is BF admissible, where $(K3, \mathfrak{t})$ denotes the $K3$ surface equipped with a spin$^c$ structure $\mathfrak{t}$ with $c_1(\mathfrak{t})=0$. By a result of Ishida and Sasahira \cite[Theorem A]{IS15} (see also \cite[Theorem 23 and Proposition 14]{IS15}) on the Bauer--Furuta invariant~\cite{BF04}, we see that $K_n$ is a Bauer--Furuta basic class of $Z:=X_n\#K3$, and thus a monopole class of $Z$ due to \cite[Proposition~6]{IL03}. 

Now let $\alpha$ be a second homology class of the $K3$ surface represented by a smoothly embedded, closed, oriented surface of genus $g>1$ satisfying $\alpha\cdot \alpha=2g-2$. As easily seen, there are many examples of such $\alpha$ (e.g.\ \cite[Theorem 1.1]{Ham14}). We note that the class $\alpha-e_1$ of the 4-manifold $Z$ is represented by a closed surface of genus $g$ with non-negative self-intersection number. 
Applying the adjunction inequality of Kronheimer~\cite[p.\ 53]{Kr99} to $Z$, we obtain the inequality 
\begin{equation*}
\langle K_n, \alpha-e_1 \rangle +(\alpha-e_1)\cdot (\alpha-e_1) \leq 2g-2.
\end{equation*}
Since the left side is $3+(2g-3)=2g$, the above inequality gives a contradiction.
\end{proof}

\begin{remark}(1) The role of the $K3$ surface in this proof can be replaced by any closed,  connected, oriented, smooth 4-manifold $Y$ with $b_2^+\equiv 3\pmod{4}$ and $b_1=0$ satisfying the following conditions: (i) $Y$ has a mod 2 Seiberg--Witten basic class; (ii) $Y$ has a smoothly embedded, closed surface of genus $g>1$ satisfying $\alpha\cdot \alpha=2g-2$. This can be easily checked by using the adjunction inequality (\cite{OzSz00JDG}). We remark that there are many examples of such $Y$. \\
(2) We used a connected sum formula of the Bauer--Furuta invariant to obtain a restriction on the smooth structure of a connected summand. A similar idea was used by the third author \cite{Y19} to impose constraints on geometrically simply connected 4-manifolds and, more generally, on 4-manifolds admitting a non-torsion second homology class represented by a 2-handle neighborhood. We remark that the $b_1=0$ condition of \cite[Theorem 2.4]{Y19} can be relaxed to conditions similar to Theorem~\ref{intro:thm:cohomology_ring} (and hence Corollaries~\ref{intro:cor:sum} and \ref{intro:cor:b_1<2}) of this paper without changing the proof, except that the connected sum formula of \cite{IS15} is used instead of the formula of \cite{Bau04}. 
\end{remark}
\begin{proof}[Proof of Theorem~\ref{intro:thm:cohomology_ring}]We note that $(\delta_i\cup \delta_j)\cup (\delta_i\cup \delta_j)=0$ for any $i,j$ (see also the proof of Lemma~\ref{proof:lem:cup}). It is thus easy to see that every spin$^c$ structure $\mathfrak{s}$ on $X$ satisfies $\langle c_1(\mathfrak{s})\cup \delta_i\cup \delta_j, [X] \rangle\equiv 0\pmod {4}$ for any $i,j$, since $c_1(\mathfrak{s})$ is characteristic, and any $\delta_i\cup \delta_j$ is either torsion or divisible by 2. Hence Theorem~\ref{intro:thm:cohomology_ring} follows from Theorem~\ref{proof:thm:spinc}. 
\end{proof}

We here observe the lemma below to prove Corollaries~\ref{intro:cor:sum} and \ref{intro:cor:b_1<2}. 
\begin{lemma}\label{proof:lem:cup}Let $X$ be a closed, connected, oriented, smooth 4-manifold with $b_1\leq 1$. Then for any classes $\gamma, \delta$ of $H^1(X;\mathbb{Z})$, the cup product $\gamma\cup \delta$ is zero. 
\end{lemma}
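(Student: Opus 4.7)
The plan is to reduce the statement to computing a single cup product $\eta \cup \eta$ for a generator $\eta$ of $H^1(X;\mathbb{Z})$, and then to evaluate that cup product via a pullback from the circle. First I would note that $H^1(X;\mathbb{Z})$ is torsion-free, since by the universal coefficient theorem it is isomorphic to $\mathrm{Hom}(H_1(X;\mathbb{Z}),\mathbb{Z})$. Combined with $b_1 \le 1$, this forces $H^1(X;\mathbb{Z})$ to be either $0$ or $\mathbb{Z}$. The case $H^1(X;\mathbb{Z})=0$ is immediate.

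In the remaining case $H^1(X;\mathbb{Z})\cong\mathbb{Z}$, fix a generator $\eta$. For any $\gamma,\delta\in H^1(X;\mathbb{Z})$ we can write $\gamma=a\eta$ and $\delta=b\eta$, so $\gamma\cup\delta=ab\,(\eta\cup\eta)$; thus it suffices to prove $\eta\cup\eta=0$. Using the natural isomorphism $H^1(X;\mathbb{Z})\cong[X,S^1]$, represent $\eta$ as $\eta=f^{*}\iota$ for some continuous map $f\colon X\to S^1$, where $\iota\in H^1(S^1;\mathbb{Z})$ is a generator. Naturality of the cup product gives $\eta\cup\eta=f^{*}(\iota\cup\iota)$, and since $H^2(S^1;\mathbb{Z})=0$ we have $\iota\cup\iota=0$, hence $\eta\cup\eta=0$.

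The one small trap I would flag explicitly is that graded commutativity alone is not enough: it only yields $2(\eta\cup\eta)=0$, and $H^2(X;\mathbb{Z})$ may contain nontrivial $2$-torsion, so one cannot conclude from this that $\eta\cup\eta=0$. This is precisely what the $S^1$-pullback argument bypasses, by placing $\eta\cup\eta$ in the image of a pullback from a space whose second cohomology vanishes outright. Beyond this observation there is no real obstacle; the entire argument is a short application of the universal coefficient theorem, the classifying property of $S^1=K(\mathbb{Z},1)$, and naturality of $\cup$.
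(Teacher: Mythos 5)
Your proof is correct, and your reduction (universal coefficients give $H^1(X;\mathbb{Z})$ torsion-free, so $b_1\le 1$ forces $H^1(X;\mathbb{Z})=0$ or $\mathbb{Z}$, and everything reduces to the square of a generator) is exactly the paper's. Where you diverge is in how the square is killed: the paper argues geometrically, representing $PD(\gamma)$ by a closed oriented codimension-one submanifold with trivial normal bundle, so that $PD(\gamma\cup\gamma)$ is represented by the (empty) intersection of the submanifold with a push-off; you instead use $S^1=K(\mathbb{Z},1)$ to write $\eta=f^{*}\iota$ and conclude $\eta\cup\eta=f^{*}(\iota\cup\iota)=0$ since $H^2(S^1;\mathbb{Z})=0$. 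The two arguments are close cousins (the paper's submanifold is essentially the preimage of a regular value of such an $f$), but yours is purely homotopy-theoretic: it needs no smoothness, orientability, or Poincar\'e duality, and in fact shows that the square of \emph{any} degree-one integral class of \emph{any} space vanishes, while the paper's version stays within the geometric toolkit it uses elsewhere. Your explicit remark that graded commutativity only gives $2(\eta\cup\eta)=0$, which is insufficient because $H^2(X;\mathbb{Z})$ may have $2$-torsion, correctly identifies the trap that both arguments are designed to avoid.
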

\begin{proof} By the universal coefficient theorem, we see that $H^1(X;\mathbb{Z})$ has no torsion. Due to the assumption $b_1(X)\leq 1$, it suffices to prove $\gamma\cup \gamma=0$ for any class $\gamma$ of $H^1(X;\mathbb{Z})$. We note that the Poincar\'{e} dual $PD(\gamma)$ is represented by a closed oriented codimension one submanifold of $X$ having a trivial normal bundle. This implies that $PD(\gamma\cup \gamma)$ is represented by the empty set, showing $\gamma\cup \gamma=0$. 
\end{proof}
\begin{proof}[Proof of Corollaries~\ref{intro:cor:sum} and \ref{intro:cor:b_1<2}] We note that any 4-manifold with $b_2^+>1$, $b_2^+-b_1\equiv 3\pmod{4}$ and $b_1\leq 1$ satisfies $b_2^+-b_1>1$. Hence, by the above lemma, Corollary~\ref{intro:cor:b_1<2} follows from Theorem~\ref{intro:thm:cohomology_ring}. Therefore, we easily see that Corollary~\ref{intro:cor:sum} also follows from Theorem~\ref{intro:thm:cohomology_ring}. 
\end{proof}

\subsection{Adjunction inequalities}

\begin{proof}[Proof of Theorem~\ref{intro:thm:immersed}]Suppose, to the contrary, that $\left|\langle K, \alpha \rangle\right|+\alpha\cdot \alpha>2p_+-2$ for some $K=c_1(\mathfrak{s})$ and $\alpha$. Then the generalized adjunction formula of Fintushel and Stern~\cite[Theorem 1.3]{FS95} shows that $K'=K+2\epsilon PD(\alpha)$ is a mod 2 Seiberg--Witten basic class, where $\epsilon=\pm1$ is the sign of $\langle K, \alpha \rangle$. It is straightforward to see that $K'=c_1(\mathfrak{s}')$ satisfies 
\begin{equation*}
d_X(\mathfrak{s}') = d_X(\mathfrak{s})+\left|\langle K, \alpha \rangle\right|+\alpha\cdot \alpha> 0.
\end{equation*}
Since $X$ is of mod 2 Seiberg--Witten simple type due to Theorem~\ref{intro:thm:cohomology_ring}, this is a contradiction. 
\end{proof}

Ozsv{\'a}th and Szab{\'o}~\cite{OzSz00Ann} introduced the Seiberg--Witten invariant of the form  
\[
SW_{X,\mathfrak{s}}\colon \mathbb{A}(X) \to \mathbb{Z}
\]
for a spin$^c$ structure $\mathfrak{s}$,
where $\mathbb{A}(X) = \bigwedge H_1(X;\mathbb{Z})\otimes\mathbb{Z}[U]$,  $H_1(X;\mathbb{Z})$ has grading $1$ and $U$ is a degree $2$ generator (cf.\ \cite{Tau99}).
This function and the integer valued invariant have the relation 
\[
SW_{X,\mathfrak{s}}(U^{d_X(\mathfrak{s})/2}) = SW_X(\mathfrak{s})
\] 
when $d_X(\mathfrak{s})$ is non-negative and even.
Let $\Sigma$ be a smoothly embedded, closed, oriented surface of genus $g$ representing $\alpha$. 
For such  a surface $\Sigma$, they defined the class $\xi(\Sigma)\in \mathbb{A}(X)$ by
\[
\xi(\Sigma) = \prod_{i=1}^g(U-A_i\cdot B_i),
\]
where $\{A_i,B_i\}_{i=1}^g$ are the images in $H_1(X;\mathbb{Z})$ of a standard symplectic basis for $H_1(\Sigma;\mathbb{Z})$.
\begin{proof}[Proof of Theorem~\ref{intro:thm:embedded}]
Suppose $\left|\langle K, \alpha \rangle\right|+\alpha\cdot \alpha > 2g-2$.
Then, 
by \cite[Theorem 1.3]{OzSz00Ann}, the relation
\[
SW_{X, \mathfrak{s}+\epsilon\alpha}(\xi(\epsilon\Sigma)U^m) = SW_{X,\mathfrak{s}}(1)
\]
holds, 
where $\epsilon=\pm1$ is the sign of $\langle K,\alpha\rangle$ and $2m=|\langle K,\alpha\rangle|+\alpha\cdot\alpha-2g$.
Since $\xi(\Sigma)=U^g$ when $b_1(X)\leq 1$, we obtain
\[
SW_X(\mathfrak{s}+\epsilon\alpha) = SW_{X, \mathfrak{s}+\epsilon\alpha}(U^{m+g}) = SW_{X,\mathfrak{s}}(1)=SW_X(\mathfrak{s})\underset{(2)}{\equiv} 1.
\]
On the other hand, $d_X(\mathfrak{s}+\epsilon\alpha) = d_X(\mathfrak{s})+ \left|\langle K, \alpha \rangle\right|+\alpha\cdot \alpha>0$.
This contradicts  Corollary~\ref{intro:cor:b_1<2}.
\end{proof}
\begin{remark}
Conjecture \ref{conj:simpletype} was originally posed for $4$-manifolds with $b_1=0$ in relation to Witten's conjecture  \cite{Wi94,FL15} on the relationship between the Donaldson and Seiberg--Witten invariants,  and was later extended to the case of arbitrary $b_1$ in the literature.
Ozsv{\'a}th and Szab{\'o}~\cite{OzSz00Ann} gave a stronger version of the simple type condition.
They call $X$  of simple type when the function $SW_{X,\mathfrak{s}}$ is identically zero if $d_X(\mathfrak{s})\neq 0$.
Taubes \cite[Proof of Proposition 2.2]{Tau99} proved that every closed symplectic $4$-manifold $X$ with $b_2^+(X)>1$ is of simple type in this strong sense (see also \cite[Remark 3.3]{OzSz00Ann}). 
However, in contrast to the case of (ordinary) simple type, there are many 4-manifolds which are not of strong simple type.
For instance, it follows from the surgery formula of Ozsv{\'a}th and Szab{\'o}~\cite[Proposition 2.2]{OzSz00JDG}  that a connected sum $X\#(S^1 \times S^3)$ is such an example when $X$ has a non-vanishing integer Seiberg--Witten invariant.
\end{remark}

\subsection{A vanishing theorem for mod 2 Seiberg--Witten invariants}
We recall a definition and a lemma given in \cite{Y19}. 
\begin{definition}
Let $\alpha$ be a second homology class of a smooth 4-manifold $X$. We say that $\alpha$ is represented by a \textit{2-handle neighborhood}, if $X$ has a codimension zero submanifold $W$ satisfying the following conditions. 
\begin{itemize}
 \item The submanifold $W$ is diffeomorphic to a 4-manifold obtained from the 4-ball by attaching a single 2-handle. (This submanifold will be called a 2-handle neighborhood.) 
 \item $\alpha$ is the image of a generator of $H_2(W;\mathbb{Z})\cong \mathbb{Z}$ by the inclusion induced homomorphism $H_2(W;\mathbb{Z})\to H_2(X;\mathbb{Z})$. 
\end{itemize}
\end{definition}
\begin{lemma}[{\cite[Lemma 3.1]{Y19}}]\label{proof:lem:geometrically}Every second homology class of a geometrically simply connected, compact, smooth 4-manifold is represented by a 2-handle neighborhood. 
\end{lemma}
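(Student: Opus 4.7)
The plan is to modify the given handle decomposition of $X$ so that one of its $2$-handles, together with the $0$-handle, forms a codimension-zero submanifold $W$ whose generator of $H_2(W;\mathbb{Z})$ maps to $\alpha$. The modification uses a single cancelling $2$-$3$ handle pair to produce a primitive lift of $\alpha$, followed by handle slides to realize this primitive lift as a single $2$-handle core.

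First I would choose a handle decomposition $X = h^0 \cup h_1^2 \cup \dots \cup h_k^2 \cup (\text{3- and 4-handles})$ provided by geometric simple connectivity, and write $X_2 = h^0 \cup h_1^2 \cup \dots \cup h_k^2$ for the $2$-skeleton. Capping each core disk by a disk in $h^0$ gives sphere classes $a_1,\dots,a_k$ freely generating $H_2(X_2;\mathbb{Z})\cong\mathbb{Z}^k$, and the inclusion $X_2\hookrightarrow X$ is surjective on $H_2$ since $3$-handles only add relations and the $4$-handle contributes nothing in degree $2$. Fix any lift $\tilde\alpha=\sum n_i a_i \in H_2(X_2;\mathbb{Z})$ of $\alpha$. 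This lift need not be primitive (in particular when $\alpha=0$), which would obstruct the final step, since one cannot make a non-primitive class in $H_2$ of the $2$-skeleton equal to a single basis element by handle slides.

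To circumvent this, I would create a cancelling $2$-$3$ handle pair: a trivially attached $0$-framed $2$-handle $h_{k+1}^2$ along a small unknot in $\partial h^0$, together with a $3$-handle killing its core. This leaves $X$ unchanged up to diffeomorphism, while enlarging $H_2$ of the new $2$-skeleton to $\mathbb{Z}^{k+1}$ with a new generator $a_{k+1}$ mapping to $0 \in H_2(X)$, so that $\tilde\alpha+a_{k+1}=(n_1,\dots,n_k,1)$ is a \emph{primitive} lift of $\alpha$. Since a primitive vector in $\mathbb{Z}^{k+1}$ extends to a $\mathbb{Z}$-basis, and handle slides between $2$-handles realize arbitrary $SL_{k+1}(\mathbb{Z})$-changes of the basis of core-sphere classes, a suitable sequence of slides yields a new handle decomposition of $X$ whose first $2$-handle $\hat h$ has core sphere representing $\tilde\alpha+a_{k+1}$ in $H_2$ of the $2$-skeleton, hence representing $\alpha$ in $H_2(X)$. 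Setting $W = h^0 \cup \hat h$ concludes the argument: $W$ is diffeomorphic to a $4$-ball with a single $2$-handle attached, and the inclusion $W\hookrightarrow X$ sends the generator of $H_2(W)\cong\mathbb{Z}$ given by the core sphere to $\alpha$. The one point to verify carefully is that the cancelling-pair creation and the subsequent handle slides can all be performed inside the full handle decomposition of $X$, with the $3$- and $4$-handles reattached via the induced boundary diffeomorphisms after each move; this is standard handlebody theory but is the main bookkeeping obstacle.
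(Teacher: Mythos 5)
Your argument is essentially the standard one, and essentially the argument of the cited source: this paper does not prove the lemma itself but quotes it from \cite[Lemma 3.1]{Y19}, where the proof is exactly the trick you use --- introduce a cancelling $2$-/$3$-handle pair and then realize the desired class as the class of a single $2$-handle by handle slides, taking $W$ to be the $0$-handle together with that $2$-handle. Two small remarks. First, your claim that the core disks can be capped off ``by disks in $h^0$'' to give \emph{sphere} classes is false in general: the attaching circles are knots in $S^3=\partial h^0$ and need not be slice; one caps the cores with pushed-in Seifert surfaces instead. This is harmless, since your argument only uses that $H_2$ of the $2$-skeleton is freely generated by the handle classes (immediate from the cellular chain complex, as there are no $1$-handles), not that the generators are spheres; likewise the generator of $H_2(W)$ is a capped core surface, not a ``core sphere''. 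Second, you can avoid the appeal to realizing all of $SL_{k+1}(\mathbb{Z})$ by slides: it suffices to slide the new trivially attached $2$-handle over $h_i^2$ exactly $|n_i|$ times (with appropriate signs), after which its class is $a_{k+1}+\sum n_i a_i$, which maps to $\alpha$; the reattachment of the $3$- and $4$-handles is automatic since slides are isotopies of attaching maps in the boundary of the lower handlebody and do not change the ambient manifold.
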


For an oriented 4-manifold $X$, let $\overline{X}$ denote the 4-manifold $X$ equipped with the reverse orientation.  We show the following vanishing theorem for mod 2 Seiberg--Witten invariants. 
\begin{theorem}\label{proof:thm:orientation}Let $X$ be a closed, connected, oriented, smooth 4-manifold satisfying $b_2^+-b_1>1$, $b_2^--b_1>1$, $b_2^+-b_1\not\equiv 1$ and $b_2^--b_1\not\equiv 1\pmod{4}$, and let $\{\delta_1, \delta_2, \dots, \delta_k\}$ be a generating set of $H^1(X;\mathbb{Z})$. Suppose that each cup product $\delta_i\cup \delta_j$ is either torsion or divisible by 2. If $X$ admits a non-torsion second homology class represented by a 2-handle neighborhood, then at least one of $X$ and $\overline{X}$ has the vanishing mod 2 Seiberg--Witten invariant. 
\end{theorem}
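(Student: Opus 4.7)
The plan is to argue by contradiction, adapting the proof of \cite[Theorem 2.4]{Y19} along the lines indicated in the second part of the remark following Theorem~\ref{proof:thm:spinc} (namely, replacing Bauer's connected sum formula by the Ishida--Sasahira formula in order to accommodate $b_1>0$). Suppose, to the contrary, that neither $X$ nor $\overline{X}$ has vanishing mod 2 Seiberg--Witten invariant. Since the integer-valued Seiberg--Witten invariant (and hence its mod 2 reduction) vanishes automatically whenever $b_2^+-b_1$ is even, the non-vanishing hypothesis combined with $b_2^\pm-b_1\not\equiv 1\pmod 4$ forces $b_2^+-b_1\equiv 3\pmod 4$ and $b_2^--b_1\equiv 3\pmod 4$.

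The cup product hypothesis is orientation-independent, and the bounds $b_2^\pm-b_1>1$ hold by assumption, so Theorem~\ref{intro:thm:cohomology_ring} applies to both $X$ and $\overline{X}$: both are of mod 2 Seiberg--Witten simple type. Fix mod 2 basic classes $\mathfrak{s}_1$ on $X$ and $\mathfrak{s}_2$ on $\overline{X}$; by simple type, $d_X(\mathfrak{s}_1)=d_{\overline{X}}(\mathfrak{s}_2)=0$. Mimicking the first part of the proof of Theorem~\ref{proof:thm:spinc} (the preliminary blow-up step there is unnecessary since $d=0$ already), both $(X,\mathfrak{s}_1)$ and $(K3,\mathfrak{t})$ are BF admissible in the sense of \cite{IS15}, so by \cite[Theorem A]{IS15} together with \cite[Proposition 6]{IL03}, $c_1(\mathfrak{s}_1)$ is a monopole class of $Z_+:=X\#K3$. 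The same argument shows that $c_1(\mathfrak{s}_2)$ is a monopole class of $Z_-:=\overline{X}\#K3$.

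For the contradiction, I would exploit the 2-handle neighborhood $W\subset X$ representing the non-torsion class $\alpha$: its core disk, capped off by a Seifert surface of the attaching circle pushed slightly into $D^4$, gives an embedded surface $\Sigma_\alpha\subset X$ representing $\alpha$, of genus $g_0$ equal to the Seifert genus of the attaching knot and self-intersection $n$ equal to the framing of the 2-handle. Tubing $\Sigma_\alpha$ with a suitable genus-$g$ surface of self-intersection $2g-2$ in the $K3$ summand yields closed surfaces in $Z_+$ and $Z_-$ with readily computable genera and self-intersections. Applying Kronheimer's adjunction inequality \cite{Kr99} to these surfaces and the monopole classes from the previous step produces constraints on $\langle c_1(\mathfrak{s}_i),\alpha\rangle$ and $n$. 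Because $\alpha\cdot\alpha=n$ in $X$ while $\alpha\cdot\alpha=-n$ in $\overline{X}$, the two resulting inequalities pull in opposite directions on $n$; combined with the non-torsion condition on $\alpha$ and, if necessary, the Seiberg--Witten blow-up formula to produce basic classes whose Chern classes pair non-trivially with $\alpha$, this should yield the desired contradiction.

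The hard part is this last step: orchestrating the blow-ups (possibly at points on $\Sigma_\alpha$) and the tube construction so that the two adjunction inequalities are provably incompatible. This is precisely the geometric content of the 2-handle neighborhood argument, and it is the reason this hypothesis---rather than mere simple connectivity or mere existence of a non-torsion class---is the natural one here; the explicit embedded surface it provides, with controlled genus and explicit self-intersection, is what enables the adjunction machinery to bite.
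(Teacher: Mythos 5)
Your first half matches the paper's strategy: the parity argument forcing $b_2^\pm-b_1\equiv 3\pmod 4$, and the use of Theorem~\ref{intro:thm:cohomology_ring} for both orientations to reduce everything to spin$^c$ structures with $d=0$, is exactly how the paper proceeds. At that point, however, the paper simply invokes Theorem~\ref{proof:thm:orientation_d=0} (the statement, extracted from the proof of \cite[Theorem~2.4]{Y19} with the connected sum formula of \cite{IS15} replacing that of \cite{Bau04}), which says that for at least one orientation all $d=0$ classes vanish mod~2; combined with mod~2 simple type this finishes the proof. What you attempt instead is to reprove that $d=0$ statement from scratch, and this is where the genuine gap lies --- you yourself flag the final contradiction as ``the hard part,'' and the route you sketch does not close it.

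Concretely, forming $Z_+=X\# K3$ and $Z_-=\overline{X}\# K3$ \emph{separately} cannot work. The surface you build from the 2-handle core capped with a Seifert surface has genus $g_0$ equal to the Seifert genus of the attaching knot, which is completely uncontrolled (and is the same for the mirror knot in $\overline{X}$). The two adjunction inequalities you would get have the form $\langle c_1(\mathfrak{s}_1),\alpha\rangle+n\lesssim 2g_0$ and $\langle c_1(\mathfrak{s}_2),\alpha\rangle-n\lesssim 2g_0$, with $\mathfrak{s}_1,\mathfrak{s}_2$ unrelated basic classes and no lower bound on the pairings $\langle c_1(\mathfrak{s}_i),\alpha\rangle$; since $g_0$ can be arbitrarily large, these are never incompatible, and blow-ups only make the right-hand sides looser. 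The actual argument (in \cite{Y19}) couples the two orientations in a \emph{single} connected sum $X\#\overline{X}$: BF admissibility of $(X,\mathfrak{s}_1)$ and $(\overline{X},\mathfrak{s}_2)$ (this is where both congruences $b_2^\pm-b_1\equiv 3\pmod 4$ enter simultaneously) gives, via the Ishida--Sasahira formula and \cite[Proposition~6]{IL03}, a monopole class on $X\#\overline{X}$; and the 2-handle neighborhood together with its mirror yields an embedded \emph{sphere} of self-intersection zero representing the non-torsion class $\alpha+\bar\alpha$ (the two core disks glue along an annulus between the attaching knot and its reversed mirror), which is incompatible with the existence of a monopole class. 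That pairing of the two orientations, eliminating the Seifert genus entirely, is the missing idea; alternatively, you could simply have cited Theorem~\ref{proof:thm:orientation_d=0} as the paper does.
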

We note that the existence of a non-torsion second homology class represented by a 2-handle neighborhood is much easier to verify than the geometrically simply connected condition, since it is often not necessary to decompose an entire 4-manifold into a handlebody. Indeed, many closed 4-manifolds including non-simply connected ones admit such second homology classes. See \cite[Section 3]{Y19} for more background on such 4-manifolds. 

The proof of this theorem relies on the following result. 
\begin{theorem}[\cite{Y19}]\label{proof:thm:orientation_d=0}Let $X$ be a 4-manifold satisfying the assumption of Theorem~\ref{proof:thm:orientation}. Then at least one of the following properties holds. 
\begin{itemize}
 \item Every spin$^c$ structure $\mathfrak{s}$ with $d_X(\mathfrak{s})=0$ satisfies $SW_X(\mathfrak{s})\equiv 0\pmod {2}$. 
 \item Every spin$^c$ structure $\mathfrak{s}$ with $d_{\overline{X}}(\mathfrak{s})=0$ satisfies $SW_{\overline{X}}(\mathfrak{s})\equiv 0\pmod {2}$. 
\end{itemize}
\end{theorem}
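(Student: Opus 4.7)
The plan is to derive the theorem by combining the two tools already available: Theorem~\ref{proof:thm:orientation_d=0}, which in at least one orientation kills every spin$^c$ structure with $d=0$, together with Theorem~\ref{intro:thm:cohomology_ring}, which when applicable is a mod 2 simple type statement and hence kills every spin$^c$ structure with $d\neq 0$. The mod 2 Seiberg--Witten invariant then vanishes identically in that orientation by exhausting the range of $d$.

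Concretely, let $Y\in\{X,\overline{X}\}$ be the orientation supplied by Theorem~\ref{proof:thm:orientation_d=0}, so that $SW_Y(\mathfrak{s})\equiv 0\pmod 2$ for every spin$^c$ structure $\mathfrak{s}$ with $d_Y(\mathfrak{s})=0$. I would first note that the hypotheses of the theorem are invariant under orientation reversal: the cup products $\delta_i\cup\delta_j$ are intrinsic to the ring $H^*(X;\mathbb{Z})$, the conditions of being torsion or divisible by $2$ refer only to the abelian group structure, $b_1$ is orientation-independent, and the numerical conditions on $b_2^{\pm}-b_1$ are stated symmetrically in $\pm$. Consequently the hypotheses carry over to $Y$, and it remains only to show that $Y$ has mod 2 Seiberg--Witten simple type.

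To verify this I would do a brief case analysis on $n:=b_2^+(Y)-b_1(Y)$. The hypotheses of the theorem give $n>1$ and $n\not\equiv 1\pmod 4$, leaving $n\equiv 0,2,3\pmod 4$. When $n$ is even, the integer-valued invariant $SW_Y(\mathfrak{s})$ vanishes identically by definition, as recalled in Section~\ref{sec:introduction}, so $Y$ trivially has mod 2 simple type. When $n\equiv 3\pmod 4$, the cup product hypothesis together with $n>1$ places $Y$ exactly in the setting of Theorem~\ref{intro:thm:cohomology_ring}, which directly supplies mod 2 simple type. Either way, every spin$^c$ structure $\mathfrak{s}$ on $Y$ with $d_Y(\mathfrak{s})\neq 0$ satisfies $SW_Y(\mathfrak{s})\equiv 0\pmod 2$, and combined with Theorem~\ref{proof:thm:orientation_d=0} this forces $SW_Y(\mathfrak{s})\equiv 0\pmod 2$ for every spin$^c$ structure on $Y$, as required.

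I do not anticipate a substantive obstacle: both inputs are already granted, and the role of the ``$\not\equiv 1\pmod 4$'' assumption on \emph{both} $b_2^+-b_1$ and $b_2^--b_1$ is precisely to make the parity case analysis above go through no matter which of $X$ or $\overline{X}$ is selected by Theorem~\ref{proof:thm:orientation_d=0}. The only mild care is in checking that the cup product hypothesis is genuinely orientation-independent, so that Theorem~\ref{intro:thm:cohomology_ring} may be applied to whichever of $X$, $\overline{X}$ is needed.
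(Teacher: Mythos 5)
Your proposal does not prove the statement in question: it assumes it. The statement to be proved is Theorem~\ref{proof:thm:orientation_d=0} itself, namely that for at least one orientation every spin$^c$ structure with vanishing virtual dimension has even Seiberg--Witten invariant. Your argument begins by letting $Y\in\{X,\overline{X}\}$ be ``the orientation supplied by Theorem~\ref{proof:thm:orientation_d=0}'', i.e.\ you take precisely this $d=0$ vanishing as an input, and then you upgrade it to vanishing for all spin$^c$ structures via the parity case analysis on $b_2^+(Y)-b_1(Y)$ and Theorem~\ref{intro:thm:cohomology_ring}. That is a (correct, and essentially the paper's) proof of Theorem~\ref{proof:thm:orientation}, but it is circular as a proof of Theorem~\ref{proof:thm:orientation_d=0}: nothing in your steps produces the $d=0$ vanishing for either orientation, and mod 2 simple type by itself says nothing at all about spin$^c$ structures with $d=0$. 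Note also that your proposal never uses the hypothesis that $X$ admits a non-torsion second homology class represented by a 2-handle neighborhood, which is the decisive assumption for the statement at hand; that alone signals that the real content has been bypassed.

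The paper proves Theorem~\ref{proof:thm:orientation_d=0} by a completely different route, importing it from \cite{Y19}: one re-runs the proof of \cite[Theorem~2.4]{Y19}, substituting the connected sum formula of Ishida--Sasahira \cite{IS15} for Bauer's formula \cite{Bau04} so as to allow $b_1>0$. The mechanism there is the one sketched in the remark following Theorem~\ref{proof:thm:spinc}: if both $X$ and $\overline{X}$ carried spin$^c$ structures with $d=0$ and odd Seiberg--Witten invariant, both pairs would be BF admissible, and the connected sum formula for the Bauer--Furuta invariant, combined with the 2-handle neighborhood hypothesis, yields a constraint on the smooth structure that leads to a contradiction. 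To make your write-up a proof of the stated theorem you would need to supply that argument (or an equivalent one); as written, your text only reproves the easy deduction of Theorem~\ref{proof:thm:orientation} from Theorem~\ref{proof:thm:orientation_d=0} together with Theorem~\ref{intro:thm:cohomology_ring}.
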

This theorem is implicit in the proof of \cite[Theorem~2.4]{Y19}, which states that any 4-manifold with $b_1=0$ satisfying the assumption of Theorem~\ref{proof:thm:orientation} admits no symplectic structure for at least one orientation. The proof for the $b_1=0$ case of Theorem~\ref{proof:thm:orientation_d=0} is identical with the proof of \cite[Theorem~2.4]{Y19}, and the proof for the general case also is identical, except that the connected sum formula of \cite{IS15} is used instead of the formula of \cite{Bau04}. 

\begin{proof}[Proof of Theorem~\ref{proof:thm:orientation}]
This is straightforward from Theorems~\ref{proof:thm:orientation_d=0} and \ref{intro:thm:cohomology_ring}. 
\end{proof}
\begin{proof}[Proof of Theorem~\ref{intro:thm:orientation}]
This is straightforward from Theorem~\ref{proof:thm:orientation} and Lemma~\ref{proof:lem:geometrically}. 
\end{proof}

\subsection*{Acknowledgements}The authors would like to thank Hirofumi Sasahira for helpful comments. Kato was partially supported by JSPS KAKENHI Grant Numbers 17H02841 and 17H06461. Nakamura was partially supported by JSPS KAKENHI Grant Number 19K03506. Yasui was partially supported by JSPS KAKENHI Grant Numbers 17K05220, 19H01788, and 19K03491. 


\begin{thebibliography}{35}
\bibitem{ABBKP10}A.\ Akhmedov, S.\ Baldridge, R.\ I.\ Baykur, P.\ Kirk, and B.\ D.\ Park, \textit{Simply connected minimal symplectic 4-manifolds with signature less than $-1$}, J.\ Eur.\ Math.\ Soc.\ \textbf{12} (2010), no.\ 1, 133--161.
\bibitem{BK07}S.\ Baldridge and P.\ Kirk, \textit{On symplectic 4-manifolds with prescribed fundamental group}, Comment.\ Math.\ Helv.\ \textbf{82} (2007), no.\ 4, 845--875. 
\bibitem{BF04}S.\ Bauer and M.\ Furuta, \textit{A stable cohomotopy refinement of Seiberg-Witten invariants. I}, Invent.\ Math.\ \textbf{155} (2004), no.\ 1, 1--19. 
\bibitem{Bau04}S.\ Bauer, \textit{A stable cohomotopy refinement of Seiberg-Witten invariants. II}, Invent.\ Math.\ \textbf{155} (2004), no.\ 1, 21--40. 
\bibitem{Dra97}T.\ Draghici, \textit{Seiberg-Witten invariants when reversing orientation}, Turkish J.\ Math.\ \textbf{21} (1997), no.\ 1, 83--86. 
\bibitem{FL15}P.\ Feehan and T.\ G.\ Leness, \textit{Witten's Conjecture for many four-manifolds of simple type}, J.\ Eur.\ Math.\ Soc.\ (JEMS) \textbf{17} (2015), no.\ 4, 899--923.
\bibitem{FS95}R.\ Fintushel and R.\ J.\ Stern, \textit{Immersed spheres in 4-manifolds and the immersed Thom conjecture}, Turkish J.\ Math.\ \textbf{19} (1995), 145--157.
\bibitem{GS99}R.\,E.\ Gompf and A.\,I.\ Stipsicz, \textit{$4$-manifolds and Kirby calculus}, Graduate Studies in Mathematics, \textbf{20}. American Mathematical Society, 1999.
\bibitem{Ham14}M.\ J.\ D.\ Hamilton, \textit{The minimal genus problem for elliptic surfaces}, Israel J.\ Math.\ \textbf{200} (2014), no.\ 1, 127--140. 
\bibitem{IL03}M.\ Ishida and C.\ LeBrun, \textit{Curvature, connected sums, and Seiberg-Witten theory}, Comm.\ Anal.\ Geom.\ \textbf{11} (2003), no.\ 5, 809--836. 
\bibitem{IS15}M.\ Ishida and H.\ Sasahira, \textit{Stable cohomotopy Seiberg-Witten invariants of connected sums of four-manifolds with positive first Betti number, I: non-vanishing theorem}, Internat.\ J.\ Math.\ \textbf{26} (2015), no.\ 6, 1541004.
\bibitem{Kir78}R.\ Kirby, \textit{Problems in low dimensional manifold theory}, Algebraic and geometric topology (Proc.\ Sympos.\ Pure Math., Stanford Univ., Stanford, Calif., 1976), Part 2, pp.\ 273--312, Proc.\ Sympos.\ Pure Math., XXXII, Amer.\ Math.\ Soc., Providence, R.I., 1978. 
\bibitem{Ko92}D.\ Kotschick, \textit{Orientation-reversing homeomorphisms in surface geography},  Math.\ Ann.\ \textbf{292} (1992), no.\ 2, 375--381. 
\bibitem{Ko97}D.\ Kotschick, \textit{Orientations and geometrisations of compact complex surfaces},  Bull.\ London Math.\ Soc.\ \textbf{29} (1997), no.\ 2, 145--149.
\bibitem{KM94}P.\ Kronheimer and T.\ Mrowka, \textit{The genus of embedded surfaces in the projective plane}, Math.\ Res.\ Lett.\ \textbf{1} (1994), no.\ 6. 797--808.
\bibitem{Kr99}P.\ B.\ Kronheimer, \textit{Minimal genus in $S^1\times M^3$}, Invent.\ Math.\ \textbf{135} (1999), no.\ 1, 45--61.
\bibitem{KM07}P.\ Kronheimer and T.\ Mrowka,  \textit{Monopoles and three-manifolds}, New Mathematical Monographs \textbf{10}, Cambridge University Press, 2007.
\bibitem{MST96}J.\ Morgan, Z.\ Szab\'{o} and C.\ Taubes, \textit{A product formula for the Seiberg-Witten invariants and the generalized Thom conjecture}, J.\ Differential Geom.\ \textbf{44} (1996), 706--788.
\bibitem{Ni00}L.\ I.\ Nicolaescu, \textit{Notes on Seiberg-Witten theory}, Graduate Studies in Mathematics, \textbf{28}, American Mathematical Society, 2000.
\bibitem{OzSz00Ann}P.\ Ozsv\'{a}th and Z.\ Szab\'{o}, \textit{The symplectic Thom conjecture}, Ann.\ of Math.\ \textbf{151} (2000), 93--124. 
\bibitem{OzSz00JDG}P.\ Ozsv\'{a}th and Z.\ Szab\'{o}, \textit{Higher type adjunction inequalities in Seiberg-Witten theory}, J.\ Differential Geom.\ \textbf{55} (2000), no.\ 3, 385--440.
\bibitem{Pa07}J.\ Park, \textit{The geography of symplectic 4-manifolds with an arbitrary fundamental group}, Proc.\ Amer.\ Math.\ Soc.\ \textbf{135} (2007), no.\ 7, 2301--2307. 
\bibitem{Tau96}C.\ H.\ Taubes, \textit{SW $\Rightarrow$ Gr: from the Seiberg-Witten equations to pseudo-holomorphic curves}, J.\ Amer.\ Math.\ Soc.\ \textbf{9} (1996), no.\ 3, 845--918.
\bibitem{Tau99}C.\ H.\ Taubes,  \textit{$Gr=SW$: counting curves and connections}, J.\ Differential Geom.\ \textbf{52} (1999), no.\ 3, 453--609.
\bibitem{To11}R.\ Torres, \textit{Geography of spin symplectic four-manifolds with abelian fundamental group}, J.\ Aust.\ Math.\ Soc.\ \textbf{91} (2011), no.\ 2, 207--218.
\bibitem{To14}R.\ Torres, \textit{Geography and botany of irreducible non-spin symplectic 4-manifolds with abelian fundamental group},  Glasg.\ Math.\ J.\ \textbf{56} (2014), no.\ 2, 261--281.
\bibitem{Wi94}E.\ Witten, \textit{Monopoles and four-manifolds}, Math.\ Res.\ Lett.\ \textbf{1} (1994), no.\ 6, 769--796.
\bibitem{Wu48}W.\ Wu, \textit{Sur la structure presque complexe d'une vari\'{e}t\'{e} diff\'{e}rentiable r\'{e}elle de dimension 4}, C.\ R.\ Acad.\ Sci.\ Paris \textbf{227} (1948), 1076--1078.
\bibitem{Y19}K.\ Yasui, \textit{Geometrically simply connected 4-manifolds and stable cohomotopy Seiberg-Witten invariants}, Geom.\ Topol.\ \textbf{23} (2019), no.\ 5, 2685--2697.
\bibitem{Yaz13}J.\ Yazinski, \textit{A new bound on the size of symplectic 4-manifolds with prescribed fundamental group}, J.\ Symplectic Geom.\ \textbf{11} (2013), no.\ 1, 25--36.
\end{thebibliography}
\end{document}